\documentclass{amsart}


\usepackage{amsmath,amsthm,amssymb,amsfonts}
\usepackage{hyperref}






\theoremstyle{plain}
\newtheorem{thm}{Theorem}
\newtheorem{prp}[thm]{Proposition}
\newtheorem{lem}[thm]{Lemma}

\theoremstyle{definition}
\newtheorem{dfn}[thm]{Definition}

\theoremstyle{remark}
\newtheorem{rmk}[thm]{Remark}

\newcommand{\Hom}{\mathrm{Hom}}

\newcommand{\Out}{\mathrm{Out}}

\newcommand{\id}{\mathrm{id}}






\newcommand{\CF}{\mathcal{F}}

\newcommand{\CH}{\mathcal{H}}

\newcommand{\CP}{\mathcal{P}}

\newcommand{\BQ}{\mathbb{Q}}

\newcommand{\BZ}{\mathbb{Z}}

\begin{document}
\title[]{Realizing fusion systems inside finite groups}
\author{Sejong Park}
\address{School of Mathematics, Statistics and Applied Mathematics, National University of Ireland, Galway, Ireland}
\email{sejong.park@nuigalway.ie}
\begin{abstract}
We show that every (not necessarily saturated) fusion system can be realized as a full subcategory of the fusion system of a finite group. This result extends our previous work \cite{Park2010} and complements the related result \cite{LearyStancu2007} by Leary and Stancu.
\end{abstract}
\maketitle

\section{Statements of the results}

Fix a prime $p$. Let $G$ be a finite group, and let $S$ be a $p$-subgroup of $G$.  We denote by $\CF_S(G)$ the category whose objects are the subgroups of $S$ and such that for $P, Q \leq S$ we have
\[
	\Hom_{\CF_S(G)}(P,Q) = \{ \varphi \colon P \to Q \mid \exists\, x \in G \text{ s.t.\@ } \varphi(u)=xux^{-1} \text{ for } u \in P \},
\]
where composition of morphisms is composition of functions.

The category $\CF_S(G)$ above is a \emph{fusion system} on $S$. If $S$ is a Sylow $p$-subgroup of $G$, then $\CF_S(G)$ is  \emph{saturated}, but not all saturated fusion systems are of this form. Those saturated fusion systems $\CF$ such that $\CF \neq \CF_S(G)$ for any finite group $G$ having $S$ as a Sylow $p$-subgroup are called \emph{exotic fusion systems}. We refer the reader to \cite{AKOBook} for precise definitions and a general introduction to the subject. In \cite{Park2010} we showed that every saturated fusion system $\CF$ on a finite $p$-group $S$ is of the form $\CF = \CF_S(G)$ for some finite group $G$ having $S$ as a subgroup. The point here is that we are not requiring that $S$ is a Sylow $p$-subgroup of $G$. In this short note, we observe that this result holds even when $\CF$ is not saturated.

\begin{thm} \label{T:main}
Let $\CF$ be a fusion system on a finite $p$-group $S$. Then there is a finite group $G$ having $S$ as a subgroup such that $\CF = \CF_S(G)$.
\end{thm}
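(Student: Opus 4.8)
\emph{Strategy.} The plan is to realize $\CF$ first inside an auxiliary infinite group, obtained from $S$ by adjoining stable letters, and then to pass to a carefully chosen finite quotient. The only place where the analogous argument for saturated systems in \cite{Park2010} uses saturation is in invoking Alperin's fusion theorem to produce a finite generating set of morphisms; this is unnecessary here, since a fusion system on a finite $p$-group has only finitely many morphisms to begin with. By the factorization axiom every morphism of $\CF$ is an $\CF$-isomorphism followed by an inclusion, and inclusions already lie in $\CF_S(S)$; so if $\psi_1,\dots,\psi_n$ is the (finite) list of isomorphisms between subgroups of $S$ occurring in $\CF$, then $\CF$ is generated, as a fusion system over $S$, by $\psi_1,\dots,\psi_n$ together with inclusions. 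It therefore suffices to produce a finite group $G\supseteq S$ in which each $\psi_i$ is realized by conjugation and no fusion among subgroups of $S$ beyond that of $\CF$ is introduced.

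\emph{The infinite model.} Let $\Gamma$ be obtained from $S$ by adjoining elements $t_1,\dots,t_n$ subject to $t_iut_i^{-1}=\psi_i(u)$ for $u$ in the domain $P_i$ of $\psi_i$; equivalently, $\Gamma$ is the fundamental group of the graph of groups with single vertex group $S$ and one loop for each $i$, carrying $P_i$ with the two inclusions $P_i\le S$ and $\psi_i\colon P_i\xrightarrow{\ \sim\ }\psi_i(P_i)\le S$. By Britton's lemma (the normal form theorem for iterated HNN extensions), any $g\in\Gamma$ with $gRg^{-1}\le S$ for some $R\le S$ induces on $R$ a composite of $S$-conjugations and of the $\psi_i^{\pm1}$; hence $\CF_S(\Gamma)=\CF$, the reverse inclusion being immediate. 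This is a realization of $\CF$ in an infinite group of the kind used by Leary and Stancu \cite{LearyStancu2007}, and no saturation hypothesis enters. Being the fundamental group of a finite graph of finite groups, $\Gamma$ is virtually free; in particular it is residually finite and contains a torsion-free normal subgroup $N$ of finite index, which acts freely on the Bass--Serre tree $T$ of the splitting.

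\emph{The crux.} Set $G=\Gamma/N$. Since $S$ is finite it embeds in $G$, and every (finite) vertex- and edge-stabilizer of $T$ maps isomorphically onto its image in the finite graph $T/N$. The inclusion $\CF\subseteq\CF_S(G)$ is automatic, so the whole point is the reverse inclusion: $N$ must be chosen so that subgroups of $S$ lying in distinct $\CF$-classes stay non-conjugate in $G$, and so that no $\CF$-class of isomorphisms between subgroups of $S$ is enlarged in $G$. Since $\Gamma$ has only finitely many conjugacy classes of finite subgroups and virtually free groups are conjugacy separable, this can be forced by replacing $N$ by the intersection of finitely many finite-index normal subgroups---concretely, by taking $N$ deep enough that the covering $T\to T/N$ is injective on a neighbourhood of the vertex fixed by $S$, which confines any conjugacy among subgroups of $SN/N$ realized in $G$ to one already induced inside $\Gamma$, hence lying in $\CF$. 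I expect this control of the fusion in the finite quotient to be the main obstacle; the steps preceding it are formal or are standard facts about HNN extensions and virtually free groups.
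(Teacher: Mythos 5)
Your first two steps are sound: the iterated HNN extension $\Gamma$ is exactly the Leary--Stancu model, Britton's lemma gives $\CF_S(\Gamma)=\CF$ with no saturation hypothesis, and $\Gamma$, as the fundamental group of a finite graph of finite groups, is virtually free and contains a free normal subgroup $N$ of finite index. The third step, which you yourself flag as the crux, is a genuine gap, and the two tools you invoke do not close it. Element conjugacy separability (Dyer's theorem) separates single conjugacy classes in finite quotients; what you need is much stronger: for every $P\le S$ and every injective homomorphism $\varphi\colon P\to S$ \emph{not} realized by conjugation in $\Gamma$, a finite-index normal $N$ such that no $g\in\Gamma$ satisfies $gug^{-1}\in\varphi(u)N$ for \emph{all} $u\in P$ simultaneously. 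That is a separability statement about the orbit of the tuple $(u)_{u\in P}$ under simultaneous conjugation --- equivalently about emptiness of finite intersections of cosets of centralizers being detected in finite quotients, in the spirit of Ribes--Zalesskii --- and it does not follow from conjugacy separability plus the finiteness of the set of conjugacy classes of finite subgroups: already for $P=Q$, nothing you cite prevents $\Aut_G(\bar P)$ from being strictly larger than $\Aut_\CF(P)$. The geometric heuristic also fails: an element $\bar g\in G=\Gamma/N$ fusing $\bar P$ into $\bar S$ lifts to $g\in\Gamma$ with $gPg^{-1}\le SN=S\ltimes N$, the vertex $gv_0$ fixed by $gPg^{-1}$ can be arbitrarily far from $v_0$, so injectivity of $T\to T/N$ near $v_0$ controls nothing; and the induced map $P\to S$ is conjugation by $g$ followed by the retraction $S\ltimes N\to S$, which is not itself conjugation in $\Gamma$, so it is not ``already induced inside $\Gamma$''. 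To salvage this route you would have to prove simultaneous-conjugacy separability for finite subgroups of virtually free groups (control of torsion and its fusion in the profinite completion of a finite graph of finite groups), a substantial theorem that your sketch nowhere supplies.

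For contrast, the paper sidesteps quotients of the infinite model entirely: it builds (Proposition \ref{P:semichar}) a left semicharacteristic biset $X$ containing $S\times_{(S,\id)}S$ by stabilizing $\sum_{\alpha\in\Out_\CF(S)}S\times_{(S,\alpha)}S$ in the rational Burnside ring (rational coefficients substitute for the divisibility bookkeeping that saturation provides in Broto--Levi--Oliver), and then takes $G$ to be the group of automorphisms of $X$ as a right $S$-set, with $S$ embedded by left translation; the verification that $\CF=\CF_S(G)$ is then verbatim the argument of \cite{Park2010}. Note also that the obstacle in extending \cite{Park2010} was never Alperin's fusion theorem, as you suggest, but the existence of a suitable biset.
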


Thus fusion systems are precisely those categories of the form $\CF_S(G)$ for some finite group $G$ and a $p$-subgroup $S$ of $G$. Leary and Stancu \cite{LearyStancu2007} showed that every fusion system is of the form $\CF_S(G)$ where $G$ is a (possibly infinite) group having $S$ as a Sylow $p$-subgroup, in the sense that every finite $p$-subgroup of $G$ is conjugate to a subgroup of $S$. Here $\CF_S(G)$ is defined exactly the same way as when $G$ is a finite group. Leary and Stancu's construction uses HNN extensions. As in \cite{Park2010}, the proof of Theorem \ref{T:main} uses a certain $S$-$S$-biset associated to the fusion system $\CF$, though we use a slightly different one here. We keep the notations of \cite{Park2010}.

\begin{dfn}
Let $\CF$ be a fusion system on a finite $p$-group $S$. A \emph{left semicharacteristic biset} for $\CF$ is a finite $S$-$S$-biset $X$ satisfying the following properties:
\begin{enumerate}
\item $X$ is \emph{$\CF$-generated}, i.e., every transitive subbiset of $X$ is of the form $S\times_{(Q,\varphi)} S$ for some $Q\leq S$ and some $\varphi\in\Hom_\CF(Q,S)$.
\item $X$ is \emph{left $\CF$-stable}, i.e., ${}_{Q}X \cong {}_{\varphi}X$ as $Q$-$S$-bisets for every $Q\leq S$ and every $\varphi\in\Hom_\CF(Q,S)$.
\end{enumerate}
\end{dfn}

A right semicharacteristic biset is defined analogously with right $\CF$-stability instead of left $\CF$-stability; a semicharacteristic biset is a biset which is both left and right semicharacteristic. When the fusion system is saturated, semicharacteristic bisets are parametrized by Gelvin and Reeh \cite{GelvinReeh2015} using a result of Reeh \cite{ReehMonoid}, and left semicharacteristic bisets can be parametrized analogously. A \emph{left characteristic biset} is a left semicharacteristic biset $X$ such that $|X|/|S| \not\equiv 0 \pmod p$. Broto--Levi--Oliver \cite[Proposition 5.5]{BLO2003} showed that every saturated fusion system has a left characteristic biset $X$. In \cite{Park2010}, we used this biset $X$ to construct the finite group $G$ in Theorem \ref{T:main} when $\CF$ is saturated. Here we show that every fusion system has a certain left semicharacteristic biset $X$ with an additional property which falls short of making $X$ left characteristic, but which still ensures that the proof in \cite{Park2010} carries over.

\begin{prp} \label{P:semichar}
Every fusion system $\CF$ on a finite $p$-group $S$ has a left semicharacteristic biset $X$ containing $S\times_{(S,\id)} S$.
\end{prp}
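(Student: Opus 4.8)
The plan is to produce $X$ directly, as a finite disjoint union $X=\bigsqcup_{(Q,\varphi)} m_{[Q,\varphi]}\cdot\bigl(S\times_{(Q,\varphi)}S\bigr)$ indexed by pairs with $Q\le S$ and $\varphi\in\Hom_\CF(Q,S)$, with multiplicities $m_{[Q,\varphi]}\in\Z_{\ge 0}$ that are constant on $(S\times S)$-conjugacy classes of pairs and with $m_{[S,\id]}\ge 1$. For a biset of this shape property (1) is automatic and property (3) is built in, so the whole problem is to choose the multiplicities so that $X$ becomes left $\CF$-stable. The first step is the standard translation of left $\CF$-stability into fixed-point marks: putting $\Delta(Q,\varphi)=\{(\varphi(u),u):u\in Q\}\le S\times S$, one checks that $X$ is left $\CF$-stable if and only if, for every $Q\le S$, the integer $|X^{\Delta(Q,\varphi)}|$ is the same for all $\varphi\in\Hom_\CF(Q,S)$. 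The implication ``$\Rightarrow$'' is immediate from comparing the marks of the isomorphic $Q$-$S$-bisets ${}_QX$ and ${}_\varphi X$ at the graph of the inclusion $Q\hookrightarrow S$; for ``$\Leftarrow$'' one compares all marks of ${}_QX$ and ${}_\varphi X$, reducing non-graph subgroups of $Q\times S$ to graphs by a Goursat argument, using for graphs that any two elements of $\Hom_\CF(V,S)$ differ by post-composition with an $\CF$-isomorphism, and using that an $\CF$-generated $X$ has $|X^{\Delta(V,\psi)}|=0$ whenever $\psi\notin\Hom_\CF(V,S)$.

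The second step is a Mackey double-coset computation for the restriction of a transitive biset to a graph: for $Q\le S$, $\varphi\in\Hom_\CF(Q,S)$ and a graph $\Delta(T,\mu)\le S\times S$, the mark $\bigl|\bigl(S\times_{(Q,\varphi)}S\bigr)^{\Delta(T,\mu)}\bigr|$ vanishes unless $|T|\le|Q|$, and when $|T|=|Q|$ it vanishes unless $(Q,\varphi)$ lies in the $(S\times S)$-orbit of $(T,\mu)$, in which case it equals a positive constant depending only on $T$. (In particular $|X^{\Delta(T,\mu)}|$ depends on $(T,\mu)$ only through its orbit, which is what makes the orbit-indexed bookkeeping below consistent.) Hence, with multiplicities constant on orbits, the level-$T$ stability conditions say that a fixed positive multiple of each orbit-multiplicity, plus the contribution of the constituents whose domain has order $>|T|$, is a single constant $K_T$ independent of $\mu$. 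So one fixes the multiplicities by recursion on decreasing order of the domain: at the top, give every $S\times_{(S,\chi)}S$ with $\chi\in\Aut_\CF(S)$ a common value (this forces the level-$S$ conditions); then, having fixed everything with larger domain, the level-$T$ conditions express each orbit-multiplicity $m_O$ (for orbits $O$ of pairs with domain conjugate to $T$) as $(K_T-g_O)/d_O$ with $d_O>0$ and $g_O$ already known, and choosing $K_T$ large makes all $m_O\ge 0$. This solves the system over $\Q_{\ge 0}$; since the stability relations are homogeneous linear, clearing denominators upgrades the solution to a nonnegative integral one, still with $m_{[S,\id]}\ge 1$, and the corresponding finite biset is the desired $X$.

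The work concentrates in the two reductions, which I expect to be the main obstacle: establishing the mark criterion for left $\CF$-stability (the Goursat reduction of non-graph subgroups, and the vanishing of marks at $\CF$-forbidden graphs for $\CF$-generated bisets), and proving the Mackey formula with its clean dichotomy separating the ranges $|T|>|Q|$, $|T|=|Q|$, $|T|<|Q|$ and detecting the $(S\times S)$-orbit in the borderline case. Once these are in place, the recursion and the passage from a rational to an integral solution are formal. An alternative, less self-contained route would be to carry over the parametrization of semicharacteristic bisets of Reeh \cite{ReehMonoid} and Gelvin--Reeh \cite{GelvinReeh2015} to the not-necessarily-saturated setting and read off an $X$ containing $S\times_{(S,\id)}S$ from there.
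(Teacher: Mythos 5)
Your construction is correct and is, at bottom, the same as the paper's: both realize $X$ as a nonnegative \emph{rational} combination of the transitive bisets $S\times_{(Q,\varphi)}S$ seeded at the top by $\sum_{\alpha\in\Out_\CF(S)}S\times_{(S,\alpha)}S$, equalize the fixed-point marks at twisted diagonals by a downward greedy recursion (the paper packages this as Lemma~\ref{L:stabilizing S-set} applied to the product fusion system $\CF\times\CF_S(S)$ on $S\times S$, which absorbs your mark criterion and Mackey computation into the standard Burnside-ring formalism), and then clear denominators at the end. The one imprecision is harmless: in the borderline case $|T|=|Q|$ the mark equals $|N_{S\times S}(\Delta(T,\mu))|/|T|$, which depends on the $(S\times S)$-orbit of $(T,\mu)$ and not on $T$ alone, but since your recursion only uses its positivity and orbit-invariance nothing breaks.
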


We are going to prove Proposition \ref{P:semichar} and Theorem \ref{T:main} in the next section.

\begin{rmk}
In \cite[Proposition 3.1]{Park2014}, a semicharacteristic biset containing $S\times_{(S,\id)} S$ is used for a saturated fusion system $\CF$ on a finite $p$-group $S$. Thus Proposition \ref{P:semichar} tells us that \cite[Proposition 3.1]{Park2014} holds for an arbitrary fusion system $\CF$.
\end{rmk}

\section{Semicharacteristic bisets for fusion systems}

Let $G$ be a finite group. A virtual $G$-set with rational coefficients is an element of the rational Burnside ring $\BQ\otimes_\BZ B(G)$, i.e., a formal sum
\[
	\sum_{H} c_H G/H
\]
where $H$ runs over a set of representatives of conjugacy classes of subgroups of $G$ and $c_H \in \BQ$. If the coefficients of a virtual $G$-set are all nonnegative integers, then it is simply a (isomorphism class of) finite $G$-set.

The key step of the proof of Proposition \ref{P:semichar} is the following lemma, which says roughly that every virtual $S$-set with rational coefficients can be stabilized (with respect to a given fusion system $\CF$) by adding a virtual $S$-set with nonnegative rational coefficients.

\begin{lem}[cf.\@ {\cite[Lemma 5.4]{BLO2003}}] \label{L:stabilizing S-set}
Let $\CF$ be a fusion system on a finite $p$-group $S$. Let $\CH$ be a collection of subgroups of $S$ which is closed under $\CF$-conjugation and taking subgroups. Let $X_0$ be a virtual $S$-set with rational coefficients such that $|X_0^P| = |X_0^{P'}|$ for all $P, P'\leq S$ with $P, P' \notin \CH$ which are $\CF$-conjugate. Then there is a virtual $S$-set $X$ with rational coefficients such that $|X^P| = |X^{P'}|$ for all $P, P'\leq S$ which are $\CF$-conjugate, $|X^P| = |X_0^P|$ for all $P\leq S$ with $P \notin \CH$, and $X - X_0$ is a virtual $S$-set with nonnegative rational coefficients.
\end{lem}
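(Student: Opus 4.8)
The plan is to build $X$ from $X_0$ by repeatedly adding nonnegative rational multiples of transitive $S$-sets, treating the $\CF$-conjugacy classes of subgroups contained in $\CH$ one at a time and from largest to smallest. The only fact about Burnside rings needed is the elementary one that, for subgroups $P,Q\le S$, the fixed-point number $|(S/Q)^P|$ is nonzero exactly when $P$ is $S$-conjugate to a subgroup of $Q$; in particular, when $|P|=|Q|$ it is nonzero precisely when $P$ and $Q$ are $S$-conjugate, in which case $|(S/Q)^P|=|N_S(Q)|/|Q|$.

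First I would enumerate the $\CF$-conjugacy classes of subgroups lying in $\CH$ as $\CC_1,\dots,\CC_k$, arranged so that the members of $\CC_i$ have order at least that of the members of $\CC_j$ whenever $i<j$; this makes sense because the members of one $\CF$-conjugacy class are isomorphic, hence of equal order. Set $Y_0=X_0$, and assume inductively that $Y_{i-1}$ has been constructed so that $Y_{i-1}-X_0$ is a nonnegative rational combination of transitive $S$-sets, $|Y_{i-1}^P|=|X_0^P|$ for all $P\le S$ with $P\notin\CH$, and $P\mapsto|Y_{i-1}^P|$ is constant on each of $\CC_1,\dots,\CC_{i-1}$. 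Choose representatives $R_1,\dots,R_m$ of the $S$-conjugacy classes of subgroups inside $\CC_i$ (note that $|Y_{i-1}^{R_t}|$ depends only on the $S$-conjugacy class of $R_t$, since fixed-point numbers are conjugation invariant), put $c=\max_{1\le t\le m}|Y_{i-1}^{R_t}|$, and define
\[
	Y_i \;=\; Y_{i-1} \;+\; \sum_{t=1}^{m}\frac{c-|Y_{i-1}^{R_t}|}{|N_S(R_t)|/|R_t|}\;S/R_t .
\]
All coefficients are nonnegative rationals, so $Y_i-X_0$ is again a nonnegative rational combination of transitive $S$-sets.

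The key computation is that of $|Y_i^Q|$ for $Q\in\CC_i$. Since every $R_t$ has the same order as $Q$, the fact recalled above gives $|(S/R_t)^Q|=0$ unless $R_t$ is $S$-conjugate to $Q$; for the unique $t$ for which $R_t$ is $S$-conjugate to $Q$, the corresponding summand contributes $c-|Y_{i-1}^{R_t}|=c-|Y_{i-1}^Q|$, so $|Y_i^Q|=c$. Thus $P\mapsto|Y_i^P|$ is constant on $\CC_i$. It remains to see that nothing else is disturbed. For $R_t\in\CH$ one has $|(S/R_t)^P|=0$ unless $P$ is $S$-conjugate to a subgroup of $R_t$, and every such $P$ then lies in $\CH$ (as $\CH$ is closed under taking subgroups and under $\CF$-conjugation) and has order at most $|R_t|$: if $|P|<|R_t|$ then $P$ belongs to some $\CC_j$ with $j>i$, which has not yet been processed, while if $|P|=|R_t|$ then $P$ is $S$-conjugate to some $R_t$, hence $P\in\CC_i$ (and, in particular, $P$ lies in no $\CC_j$ with $j\ne i$). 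Hence the fixed-point numbers at subgroups outside $\CH$ and at subgroups in $\CC_1,\dots,\CC_{i-1}$ are unchanged in passing from $Y_{i-1}$ to $Y_i$, and the induction goes through.

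Taking $X=Y_k$ then yields the three assertions: $X-X_0$ is a nonnegative rational combination of transitive $S$-sets; $|X^P|=|X_0^P|$ for every $P\notin\CH$; and $P\mapsto|X^P|$ is constant on each $\CC_i$, while for $\CF$-conjugate $P,P'\notin\CH$ one has $|X^P|=|X_0^P|=|X_0^{P'}|=|X^{P'}|$ by the hypothesis on $X_0$, and the remaining case of one of $P,P'$ lying in $\CH$ and the other not cannot occur since $\CH$ is closed under $\CF$-conjugation. I expect the one delicate point to be the bookkeeping in the inductive step: it is precisely the ordering of the $\CC_i$ from largest to smallest, together with the closure of $\CH$ under subgroups and $\CF$-conjugation and the isomorphism invariance of $\CF$-conjugacy classes, that ensures equalizing the fixed-point numbers on $\CC_i$ does not undo what was already arranged on $\CC_1,\dots,\CC_{i-1}$ or on the complement of $\CH$.
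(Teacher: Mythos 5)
Your proof is correct and is essentially the argument in the paper: both proceed by induction over the $\CF$-conjugacy classes in $\CH$ from largest down, adding nonnegative rational multiples $\frac{c-|Y^{R_t}|}{|N_S(R_t)/R_t|}\,S/R_t$ over $S$-conjugacy representatives to equalize fixed points on one class at a time, using that $S/R_t$ has no fixed points outside the subgroups subconjugate to $R_t$. Your write-up just makes explicit the ordering and the check that earlier classes and the complement of $\CH$ are undisturbed, which the paper leaves to the reader.
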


\begin{proof}
Consider an $\CF$-conjugacy class $\CP$ of subgroups of $S$ in $\CH$ which are maximal among such subgroups. Choose $P\in\CP$ such that $|X_0^P| \geq |X_0^{P'}|$ for all $P'\in\CP$. Set
\[
	X_1 = X_0 + \sum_{P'} \frac{|X_0^P| - |X_0^{P'}|}{|N_S(P')/P'|} S/P',
\]
where $P'$ runs over a set of representatives of the subgroups in $\CP$ up to $S$-conjugacy. Then for any $P'\in\CP$, we have $|X_1^{P'}| = |X_0^{P}| = |X_1^P|$. Note that $|X_1^P| = |X_0^P|$ for all $P \leq S$ with $P \notin \CH$, and hence $|X_1^P| = |X_1^{P'}|$ for all $P, P' \leq S$ with $P, P' \notin \CH \setminus \CP$ which are $\CF$-conjugate. Also, $X_1 - X_0$ is a virtual $S$-set with nonnegative rational coefficients. So by repeating this process we get a virtual $S$-set $X$ with the desired properties.
\end{proof}

Comparing the above lemma to \cite[Lemma 5.4]{BLO2003}, we see that here the lack of saturation is compensated for by allowing rational coefficients.	

\begin{proof}[Proof of Proposition \ref{P:semichar}]
Let
\[
	Y_0 = \sum_{\alpha \in \Out_\CF(S)} S\times_{(S,\alpha)} S.
\]
Then $Y_0$ satisfies the assumption of Lemma \ref{L:stabilizing S-set} with respect to the product fusion system $\CF\times\CF_S(S)$ on $S\times S$ and $\CH = \{ \Delta(P,\varphi) \mid P < S, \varphi\in\Hom_\CF(P, S) \}$. (See \cite[Definition I.6.5, Theorem I.6.6]{AKOBook} for the definition and properties of the product fusion system.) Thus Lemma \ref{L:stabilizing S-set} implies that there is a virtual $S$-set $Y$ with nonnegative rational coefficients which is $\CF$-generated and left $\CF$-stable and which contains $S\times_{(S,\id)} S$. Let $m$ be a large enough positive integer such that $X=mY$ is a (finite) $S$-set (with nonnegative integer coefficients). Then $X$ is a left semicharacteristic biset for $\CF$ containing $S\times_{(S,\id)} S$. 
\end{proof}

\begin{proof}[Proof of Theorem \ref{T:main}]
Let $\CF$ be a fusion system on a finite $p$-group $S$ and let $X$ be a left semicharacteristic biset for $\CF$ containing $S\times_{(S,\id)} S$. Let $G$ be the group of automorphisms of $X$ viewed as a right $S$-set, i.e., the group of bijections $f\colon X\to X$ such that $f(xs) = f(x)s$ for all $x\in X$ and $s\in S$. Then $S$ embeds into $G$ via
\[
	S\to G,\quad s\mapsto (x \mapsto sx).
\]
The proof of \cite[Theorem 6]{Park2010} applies verbatim to this situation. Thus we have $\CF = \CF_S(G)$.
\end{proof}

\end{document}